\newtheorem{theorem}{Theorem}[section]
\newtheorem{proposition}[theorem]{Proposition}
\newtheorem{corollary}[theorem]{Corollary}
\def\eps{\varepsilon} 
\def\la{\lambda}
\def\a{\alpha}
\def\part{\partial}
\def\b1{\bold 1}
\newcommand{\beq}{\begin{equation}}
\newcommand{\eeq}{\end{equation}}
\theoremstyle{remark}
\numberwithin{equation}{section}
\date{\today}
\begin{document}
\title[Counting records]{Counting records in a random, non-uniform, permutation}
\author{Boris Pittel}
\address{Department of Mathematics, Ohio State University, 231 West 18-th Avenue, Columbus OH 43210-1175}
\email{pittel.1@osu.edu}
\begin{abstract} Counting permutations of $[n]$ by the number of records, i.e. left-to-right maxima, is a classic problem in combinatorial enumeration. In the first volume of ``The Art of Computer Programming", Donald Knuth demonstrated its relevance for analysis of average case complexity of a basic algorithm for determining a maximum in a linear list of numbers. It is well known that the expected, and likely, number of those records in a {\it uniformly\/} random permutation is asymptotic to $\log n$. Cyril Banderier, Rene Beier, and Kurt Mehlhorn studied the case of a non-uniform random permutation, which is obtained from a generic permutation of $[n]$ by selecting its elements one after another independently with probability $p$, and permuting the selected elements uniformly at random. They proved that $E_n(p)$, the largest expected number of the maxima, is between $\text{const}\sqrt{n/p}$ and $O\bigl(\sqrt{(n/p)\log n}\bigr)$ if $p$ is fixed. For $p\gg 1/n$ and simultaneously $1-p\ge \text{const }n^{-1/2}\log n$, we prove that $E_n(p)$ is exactly of order $(1-p)\sqrt{n/p}$. 
\end{abstract}
\subjclass{60C05; 05C05, 92B10}
\maketitle
 \section{Introduction} Typically, the average case analysis of combinatorial algorithms is predicated on an assumption that the random instance of a problem is chosen uniformly at random from a given set of those instances. This assumption opens the door to powerful combinatorial methods, but inevitably limits applicability of the results to problem with considerably lesser degree of homogeneity and symmetry. In a pioneering paper \cite{BMB}, Cyril Banderier, Rene Beier, and Kurt Mehlhorn undertook a study of three algorithmic problems, in which randomization is confined to a {\it given\/} combinatorial structure. This allows, in principle, to consider a hybrid version of  algorithmic analysis, with focus on the worst {\it average}-case behavior of an algorithm applied to the random problem instance chosen from a priori restricted set of instances. The numerous references in the paper allow the authors to put their work in a proper perspective. The interested reader may also wish to consult Volume 1, and new Volume 4B of the series ``The Art of Computer Programming'' by Donald Knuth \cite{Knu} for a detailed discussion of "Find the maximum" algorithm and a succinct description of the related results from 
\cite{BMB}. 

Here is one of these  results. Let $\bar X$ be an arbitrary permutation of $[n]$. Let $X=X(\bar X)$ denote the permutation obtained by selecting the elements of $\bar X$ independently, each with probability $p$, and permuting uniformly at random the selected elements. Let $\la(X)$ denote the total number of records, i. e. left-to-right maxima, in $X$. It was proved in \cite{BMB} that, for a fixed $p$, the expected value $\Bbb E[\la(X)]=O\bigl(\sqrt{(n/p)\log n}\bigr)$, uniformly over all $\bar X$, and that $\max_{\bar X}\Bbb E[\la(X)]$ is of order $\sqrt{n/p}$ at least.

Our main result is that, in fact, for $p\gg 1/n$ and simultaneously $1-p\ge \text{const } n^{-1/2}\log n$,  $\max_{\bar X}\Bbb E[\la(X)]$ is of order $(1-p)\sqrt{n/p}$ exactly. While our methods are quite different, it is the study in \cite{BMB} that made our research possible.

Kurt Mehlhorn \cite{Meh} drew my attention to a recent paper \cite{Bei} by Beier, R\H oglin, R\H osner, and V\H ocking, 
where the model of smoothed analysis with problem instances subject to a random noise is applied to a class of integer optimization problems.
\section{Statements and proofs.}

Let $\bar X$ be an arbitrary permutation of $[n]$. Let $X$ denote the permutation obtained by marking (selecting) the elements of $\bar X$ independently, each with probability $p$, and permuting uniformly at random the marked elements. Let $\la(X)$ denote the total number of left-to-right maxima in $X$. We write $\la(X)=\la_{\text{u}}(X)+\la_{\text{m}}(X)$ where $\la_{\text{u}}(X)$ and $\la_{\text{m}}(X)$ are the numbers of the left-to-right maxima of $X$ that are, respectively, unmarked and marked elements of $\bar X$.
\begin{theorem}\label{thm1}   
$\max_{\bar X}\Bbb E[\la_{\text{u}}(X)]=O\Bigl((1-p)\sqrt{\tfrac{n}{p}}\Bigr)$, if $p\gg n^{-1}$.
\end{theorem}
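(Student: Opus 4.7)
The plan is to write
\[
\E[\la_{\text{u}}(X)] = \sum_{i=1}^n (1-p)\, P(i\text{ is a LTR max of }X\mid i\notin S),
\]
and bound each summand sharply using the structure of the marking and shuffling. Fix $i$ and set $v:=\bar X(i)$ and $t := t_i := |\{j<i:\bar X(j)>v\}|$. For $i$ to be a LTR max of $X$ we need (a) every "threat" $j<i$ with $\bar X(j)>v$ to be marked (so that its value can be shuffled away from $[1,i-1]$), contributing the factor $p^t$; and (b) in the subsequent uniform shuffle of marked values on marked positions, all marked values exceeding $v$ must land at marked positions $>i$. Partitioning the remaining $n-1-t$ positions into three independent binomial classes by (side of $i$)$\times$(sign of $\bar X(\cdot)-v$) --- with $a\sim\mathrm{Bin}(n-v-t,p)$, $b\sim\mathrm{Bin}(i-1-t,p)$, $c\sim\mathrm{Bin}(v-i+t,p)$ counting marked non-threat positions of types (right,large), (left,small), (right,small) --- the hypergeometric shuffle probability equals $\binom{b+c}{t+b}/\binom{t+a+b+c}{t+b}$ (vanishing unless $c\ge t$), giving the exact identity
\[
 P(i\text{ unmarked LTR max}) = (1-p)\, p^t\, \E\!\left[\frac{\binom{b+c}{t+b}}{\binom{t+a+b+c}{t+b}}\, \mathbf 1_{\{c\ge t\}}\right].
\]

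Next, I would bound the shuffle factor. Via the negative-correlation inequality $\binom{R}{K}/\binom{M}{K}\le (R/M)^K$ (for $R\le M$), the ratio is at most $(1-(t+a)/(t+a+b+c))^{t+b}\le\exp\!\bigl(-(t+a)(t+b)/(t+a+b+c)\bigr)$. Combining with the concentration of $a+b+c$ around its mean $p(n-1-t)$ and the MGFs of the independent binomials $a,b$, I expect a bound of the form
\[
  P(i\text{ unmarked LTR max})\;\lesssim\;(1-p)\, p^t\, \exp\!\left(-c\, p\, (n-v)\, i\, /\, n\right)
\]
for an absolute constant $c>0$, with an additional $e^{-c't}$ damping when $t\ge 1$.

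Finally, I would sum over $i$. For $t_i=0$, the position $i$ is a LTR max of $\bar X$, so the values $v_i$ along these positions are strictly increasing; listing them as $i_1<\cdots<i_K$ and setting $f_j:=n-v_{i_j}$ (strictly decreasing), one has $i_j\ge j$ and $f_j\ge K-j$, whence the contribution is at most $\sum_{j=1}^K\exp\bigl(-c\, p\, j(K-j)/n\bigr)$. A direct Gaussian-type estimate of this sum gives $O(\sqrt{n/p})$ uniformly in $K$; the extremal $\bar X$ turns out to be the "window" permutation $v_i=n-K+i$ for $i\le K$ balanced at $K\asymp\sqrt{n/p}$, which is precisely the regime that matches the known lower bound. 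For $t\ge 1$ the prefactor $p^t$ combined with the analogous counting yields a geometric tail, and summing over $t$ produces the claimed bound of order $(1-p)\sqrt{n/p}$.

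The main obstacle is Step~2: obtaining the clean exponential bound on $\E[\binom{b+c}{t+b}/\binom{t+a+b+c}{t+b}\, \mathbf 1_{\{c\ge t\}}]$ without the $\sqrt{\log n}$ loss of \cite{BMB}. The ratio must be controlled in both its typical regime (where the $(R/M)^K$ bound is tight up to polynomial corrections) and its tail regime (where atypical deviations of $a,b,c$ could inflate the expectation through a naive union bound). A careful analysis --- exploiting the joint MGF of $(a,b,c)$ together with the monotonicity of the shuffle ratio in these variables --- is needed to pin down the sharp exponent $p\,(n-v)\,i/n$; once that is in hand, the extremal argument of Step~3 is a combinatorially clean optimization.
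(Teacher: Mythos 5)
Your setup is sound and is in fact the same combinatorial decomposition the paper uses: your classes indexed by (side of $i$)$\times$(sign of $\bar X(\cdot)-v$) are exactly the sets $N_{1,1},N_{1,2},N_{2,1},N_{2,2}$ of the paper, your requirement that all $t$ threats be marked is the paper's constraint $m_{1,2}=n_{1,2}$, and your exact identity $\Bbb P = (1-p)p^t\,\E\bigl[\binom{b+c}{t+b}\binom{t+a+b+c}{t+b}^{-1}\mathbf 1_{\{c\ge t\}}\bigr]$ is just formula \eqref{new1} written as an expectation over the binomial marking instead of a sum over $m$. The difficulty is that everything after this point is a plan, not a proof, and the one step you yourself flag as ``the main obstacle'' --- showing $\E\bigl[\binom{b+c}{t+b}\binom{t+a+b+c}{t+b}^{-1}\mathbf 1_{\{c\ge t\}}\bigr]\lesssim \exp\bigl(-c\,p(n-v)i/n\bigr)$ --- is precisely where all the work lies. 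The pointwise bound $(R/M)^K\le\exp(-(t+a)(t+b)/(t+a+b+c))$ is useless on the lower-tail events (e.g.\ $a=0$ or $b=0$, where the ratio is $1$), so the expectation must be controlled by a genuine joint large-deviation computation: one must verify that for every $\lambda,\mu<1$ the event $\{a\approx\lambda p(n-v),\,b\approx\mu p i\}$ contributes $e^{-\Omega(p(n-v)i/n)}$ after weighting by its Chernoff probability, and that the infimum of the resulting exponent over $\lambda,\mu$ is still a positive multiple of $p(n-v)i/n$. This is plausible (the boundary cases check out), but it is not routine, and it degenerates when $p(n-v)$ or $pi$ is $O(1)$. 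The paper sidesteps this entirely: the Beta-integral representation \eqref{0.9new} turns the exact sum into $\int_0^p\eta^{k-1}(p-\eta)^{n_{1,2}}(1-\eta)^{n-(n_{1,2}+k)}d\eta$, the substitution $\tfrac{p-\eta}{1-\eta}<1$ makes the $n_{1,2}$-dependence (your $t$) vanish \emph{exactly}, and what remains is a single one-dimensional Laplace estimate of $\int_0^p\eta^{k-1}(1-\eta)^{n-k}d\eta$ in three regimes of $k$. You have no analogue of this cancellation, so you must carry $t$ through the whole analysis.

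Two further points. First, your summation over $t$ as described loses the factor $(1-p)$: grouping positions by $t$ and bounding each group by $O(\sqrt{n/p})$ gives $\sum_t (1-p)p^t\,O(\sqrt{n/p})=O(\sqrt{n/p})$, which is the BMB-strength bound (without even the $\log n$), not the claimed $O((1-p)\sqrt{n/p})$; this matters exactly in the regime $1-p\to0$ that the theorem is designed to cover. The fix is to discard $p^t\le 1$ and instead show $\sum_{i}\exp(-c\,p(n-v_i)i/n)=O(\sqrt{n/p})$ uniformly over permutations by a dyadic counting argument (in the block $2^j\le i<2^{j+1}$ at most $\min(2^j,\,O(n/(p2^j)))$ positions can have $(n-v_i)i\le n/p$), rather than by grouping by $t$. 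Second, a small plus for your approach: your observation that for each fixed $t$ the positions with $t_i=t$ carry strictly increasing values (so $i_j\ge j$ and $n-v_{i_j}\ge K-j$) is correct and pleasant, and your Gaussian-sum estimate $\sum_j e^{-cpj(K-j)/n}=O(\sqrt{n/p})$ is a clean way to finish once the per-position bound is in hand. But as submitted, the argument has a real gap at its analytic core, and the theorem is not yet proved.
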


\begin{proof}

\noindent Let $A_k$, $(k\in [n])$, be the event that is $x_k$ is an unmarked left-to-right maximum in the resulting permutation $X$. We need to bound $\Bbb P\bigl(A_k)$. 
On the event $A_k$, we have
$\bar x_k=x_k\ge k$, since $x_k$ is the largest element among $x_1,\dots, x_k$.
Introduce the sets $N_{i,j}=N_{i,j}(\bar x_k)$:
\begin{equation*}
\begin{aligned}
&N_{1,1}=\{\bar x_{\a}:\a<k,\,  \bar x_{\a}<\bar x_k \}, \,\,\, N_{1,2}=\{\bar x_{\a}:  \a<k,\, \bar x_{\a}>\bar x_k\},\\
&N_{2,1}=\{\bar x_{\a}:\a>k,\,  \bar x_{\a}<\bar x_k\}, \,\,\, N_{2,2}=\{\bar x_{\a}:  \a>k,\, \bar x_{\a}>\bar x_k\};\\
\end{aligned}
\end{equation*}
$x_k$ cannot be a left-to-right maximum, unless $|N_{1,2}|+k\le n$. Let
$M_{i,j}$ stand for the set of marked elements of $N_{i,j}$. 
For the event in question, $\bar x_k$ is unmarked, so $M_{1,2}=N_{1,2}$; otherwise there is an unmarked element
 $\a\in N_{1,2}\setminus M_{1,2}$ exceeding $\bar x_k$ that will stay to the left of $\bar x_k$, and $\bar x_k$ will not be a left-to-right maximum in the resulting permutation $X$.  
Denote $n_{i,j}=|N_{i,j}|$, $m_{i,j}=|M_{i,j}|$; so $\sum_{i,j} n_{i,j}=n-1$, and 
$m_{1,2}=n_{1,2}$.

Let $k_1$ and $k_2$ denote generic numbers of elements from $M_{1,1}$ and from $M_{2,1}$ that will stay to the left of $\bar x_k$, and will be moved to the left of $\bar x_k$, respectively. For the event in question to hold, $k_1+k_2=k-1\le\bar x_k-1$. In addition, all those $n_{1,2}$ elements to the left of $\bar x_k$, exceeding $\bar x_k$, will need to be marked, and moved to the right of $\bar x_k$. Consequently, on the event in question $m:=\sum_{i,j}m_{i,j}\ge k-1+n_{1,2}$. The total number of such  permutations is
\begin{multline*}
(k-1)!\bigl(m-(k-1)\bigr)!\prod_{(i,j)\neq (1,2)}\binom{n_{i,j}}{m_{i,j}}\cdot \sum_{k_1+k_2=k-1}\binom{m_{1,1}}{k_1}\binom{m_{2,1}}{k_2}\\
=(k-1)!\bigl(m-(k-1)\bigr)!\prod_{(i,j)\neq (1,2)}\binom{n_{i,j}}{m_{i,j}}\cdot\binom{m_{1,1}+m_{2,1}}{k-1};
\end{multline*}
$(1,2)$ is excluded from the double product since $\binom{n_{1,2}}{m_{1,2}}=1$. To get the contribution of all such permutations with a given $m$ to $\Bbb P(A_k)$ we need to multiply the above product by 
$\tfrac{p^m(1-p)^{n-m}}{m!}$ and evaluate the sum of all these fractions over $\{m_{i,j}\}$ subject to
constraints $\sum_{i,j}m_{i,j}=m$, $m_{1,2}=n_{1,2}$. We will repeatedly use the identity
\begin{equation}\label{classic}
\sum_{a_1+a_2=a}\binom{b_1}{a_1}\binom{b_2}{a_2}=\binom{b_1+b_2}{a}.
\end{equation}
To begin, denoting $n_{\circ,1}=n_{1,1}+n_{2,1}$,
$m_{\circ,1}=m_{1,1}+m_{2,1}$, we have
\begin{multline*}
\sum_{\{m_{i,j}\}}\prod_{(i,j)\neq (1,2)}\binom{n_{i,j}}{m_{(i,j)}}\cdot\binom{m_{1,1}+m_{2,1}}{k-1}\\
=\frac{n_{\circ,1}!}{(n_{\circ,1}-(k-1))!}\sum_{m_{\circ,1},\, m_{2,2}}\binom{n_{\circ,1}-(k-1)}{m_{\circ,1}-(k-1}\binom{n_{2,2}}{m_{2,2}}\\
=(k-1)!\binom{n_{\circ,1}}{k-1}\binom{n-n_{1,2}-k}{m-n_{1,2}-(k-1)};
\end{multline*}
as $n_{\circ,1}+n_{2,2}=n-1-n_{1,2}$. Since $n_{\circ,1}=\bar x_k-1$,  we obtain  
\begin{multline}\label{new1}
\Bbb P(A_k)=\sum_{n>m\ge n_{1,2}+k-1} \binom{m}{k-1}^{-1}\\
\times \binom{\bar x_k-1}{k-1}\binom{n-n_{1,2}-k}{m-n_{1,2}-(k-1)}\cdot p^m(1-p)^{n-m}.
\end{multline}
%
By \eqref{new1}, and a classic formula 
\begin{equation}\label{0.9new}
 \frac{a_1!\,a_2!}{(a_1+a_2+1)!}=\int_0^1t^{a_1}(1-t)^{a_2}\,dt,
\end{equation}
 we have
\begin{multline*}
\Bbb P(A_k)
=(1-p)^n\binom{\bar x_k-1}{k-1}\\
\times\int_0^1\left(\frac{p\xi}{1-p}\right)^{k-1}\left(\frac{p(1-\xi)}{1-p}\right)^{n_{1,2}}\\
\times \sum_{n>m\ge n_{1,2}+k-1}\!\!\!\!\!\!(m+1)\,
\binom{n-(n_{1,2}+k)}{m-(n_{1,2}+k-1)}\left(\frac{p(1-\xi)}{1-p}\right)^{m-(n_{1,2}+k-1)}\,d\xi.
\end{multline*}
By $m+1=(n_{1,2}+k)+(m-(n_{1,2}+ k-1))$, the bottom sum equals
\begin{multline*}
(n_{1,2}+k)\left(1+\frac{p(1-\xi)}{1-p}\right)^{n-(n_{1,2}+k)}\\
+\bigl(n-(n_{1,2}+k)\bigr)\frac{p(1-\xi)}{1-p}
\left(1+\frac{p(1-\xi)}{1-p}\right)^{n-(n_{1,2}+k)-1}\\
=\left(\frac{1-p\xi}{1-p}\right)^{n-(n_{1,2}+k)-1}\left[(n_{1,2}+k)\left(\frac{1-p\xi}{1-p}-\frac{p(1-\xi)}{1-p}\right)
+n\frac{p(1-\xi)}{1-p}\right]\\
=\left(\frac{1-p\xi}{1-p}\right)^{n-(n_{1,2}+k)-1}\cdot\left[n_{1,2}+k+n\frac{p(1-\xi)}{1-p}\right].
\end{multline*}
For $n\ge n_{1,2}+k$,
\begin{multline}\label{0.86new}
\Bbb P(A_k)=(1-p)^n\binom{\bar x_k-1}{k-1}\left(\frac{p}{1-p}\right)^{n_{1,2}+k-1}\\
\times\int_0^1\xi^{k-1}(1-\xi)^{n_{1,2}}
\cdot \left(\frac{1-p\xi}{1-p}\right)^{n-(n_{1,2}+k)-1}\cdot\left[n_{1,2}+k+n\frac{p(1-\xi)}{1-p}\right]\,d\xi\\
=\binom{\bar x_k-1}{k-1}p^{n_{1,2}+k-1}(1-p)\cdot\int_0^1 \xi^{k-1}(1-\xi)^{n_{1,2}}
(1-p\xi)^{n-(n_{1,2}+k)-1}\\
\times \bigl[(n_{1,2}+k)(1-p\xi)+\bigl(n-(n_{1,2}+k)\bigr)p(1-\xi)\bigr]\,d\xi\\
\le n\binom{\bar x_k-1}{k-1}p^{n_{1,2}+k-1}(1-p)\cdot\int_0^1 \xi^{k-1}(1-\xi)^{n_{1,2}}
(1-p\xi)^{n-(n_{1,2}+k)}\,d\xi.\\
\end{multline}
Switching from $\xi$ to $\eta=p\xi$, we transform the bound \eqref{0.86new} into
\begin{multline}\label{0.69new}
n\binom{\bar x_k-1}{k-1}\frac{1-p}{p}\int_0^p \eta^{k-1}(p-\eta)^{n_{1,2}}(1-\eta)^{n-(n_{1,2}+k)}\,d\eta\\
\le n\binom{\bar x_k-1}{k-1}\frac{1-p}{p}\int_0^p \eta^{k-1}(1-\eta)^{n-k}\,d\eta,\\
\end{multline}
since $\tfrac{p-\eta}{1-\eta}<1$. (Dependence on $n_{1,2}$ is gone!)
The last integrand attains its absolute maximum at $\bar \eta=\tfrac{k-1}{n-1}\ge p$ if $k\ge pn +1-p$. 
Consider $k\ge k_1:=\lceil(1+\eps)(n-1)p\rceil$. Since the integrand is log-concave, the integral is at most
\begin{multline*}
p^{k-1}(1-p)^{n-k}\int_0^p\exp\left(\left(\frac{k-1}{p}-\frac{n-k}{1-p}\right)(y-p)\right)\,dy\\
\le p^{k-1}(1-p)^{n-k}\int_0^p\exp\left(\frac{\eps(n-1)}{1-p}(y-p)\right)\,dy\\
\le \frac{1-p}{(n-1)\eps}\,p^{k-1}(1-p)^{n-k}.
\end{multline*}
Consequently, for $k\ge k_1$, the bound \eqref{0.69new} is at most of order
\[
\frac{(1-p)^2}{\eps p}\cdot \binom{n-1}{k-1}p^{k-1}(1-p)^{(n-1)-(k-1)}.
\]
Now $\bigl\{\binom{n-1}{j}p^{j}(1-p)^{n-1-j}:\,j\in [n-1]\bigr\}$ is the binomial distribution, with parameters $p$ and $n-1$. 
By a classic tail bound for the binomial distribution,  
Janson, \L uczak, and Ruci\'nski \cite{JLR},
\[
\sum_{k\ge k_1}\binom{n-1}{k-1}p^{k-1}(1-p)^{n-k}\le 2\exp\bigl(-\tfrac{\eps^2}{3}pn\bigr),\quad \eps\le 1.
\]
So, the {\it total\/} contribution to the bound in \eqref{0.86new} from unmarked records at locations  $k \in [k_2,n]$, 
with $k_2:= \lceil(1-\eps)(n-1)p\rceil$, is at most of order
\begin{equation}\label{0.1}
\eps pn+ \tfrac{(1-p)^2}{\eps p}\exp\bigl(-\tfrac{\eps^2}{3}pn\bigr)\le 2(1-p)\sqrt{\tfrac{n}{p}}, \quad\text{if } \eps=\eps(n):=
\tfrac{1-p}{\sqrt{pn}}.
\end{equation}

Consider $cn^{1/2}\le k\le k_2$, $c=c(p)$ to be chosen. This time, $\bar \eta=\tfrac{k-1}{n-1}$, the absolute maximum point of the bottom integrand in \eqref{0.69new} is within $(0,p)$. Denoting
$H(\eta)=(k-1)\log\eta -(n-k)\log (1-\eta)$, we see that 
\[
H''(\eta)=-\frac{k-1}{\eta^2}-\frac{n-k}{(1-\eta)^2},\quad
H'''(\eta)=\frac{2(k-1)}{\eta^3}-\frac{2(n-k)}{(1-\eta)^3}.
\]
So, $H''(\bar\eta)=-\frac{(n-1)^3}{(k-1)(n-k)}$, and $H'''(\eta)\ge 0$ for $\eta\le \bar\eta$. So,
\[
H(\eta)\le H(\bar\eta)-\frac{(n-1)^3}{2(k-1)(n-k)}(\eta-\bar\eta)^2,\quad \eta\le\bar\eta,
\]
and, for $\eta>\bar\eta$, and some $\tilde \eta\in [\bar\eta,\eta]$,
\begin{multline*}
H(\eta)=H(\bar\eta)-\frac{(n-1)^3}{2(k-1)(n-k)}(\eta-\bar\eta)^2+\frac{1}{6}H'''(\tilde\eta)(\eta-\bar\eta)^3\\
\le H(\bar\eta)-\frac{(n-1)^3}{2(k-1)(n-k)}(\eta-\bar\eta)^2+\frac{k-1}{3\tilde\eta^2}(\eta-\bar\eta)^3\\
\le H(\bar\eta)-\frac{(n-1)^3}{2(k-1)(n-k)}(\eta-\bar\eta)^2+\frac{k-1}{3\bar\eta^2}(\eta-\bar\eta)^2\\
=H(\bar\eta)-\frac{(n-1)^3}{2(k-1)(n-k)}(\eta-\bar\eta)^2\frac{1+2\bar\eta}{3}\le 
H(\bar\eta)-\frac{(n-1)^3}{6(k-1)(n-k)}(\eta-\bar\eta)^2.
\end{multline*}
It follows directly that
\begin{multline*}
\int_0^p \eta^{k-1}(1-\eta)^{n-k}\,d\eta\le e^{H(\bar\eta)}\int_0^p\exp\left(-\frac{(n-1)^3}{6(k-1)(n-k)}
(\eta-\bar\eta)^2\right)\,d\eta\\
\le e^{H(\bar\eta)}\sqrt{\frac{2\pi}{\left(\tfrac{(n-1)^3}{3(k-1)(n-k)}\right)}}\\
=\frac{(6\pi)^{1/2}}{n-1}\frac{(k-1)^{k-1}(n-k)^{n-k}}{(n-1)^{n-1}} (k-1)^{1/2}(n-k)^{1/2}\\
=O\Bigl(n^{-1}\binom{n-1}{k-1}^{-1}\Bigr).
\end{multline*}
So, the bottom bound in in \eqref{0.86new} is at most of order 
\[
\frac{1-p}{p}\binom{\bar x_k-1}{k-1}\binom{n-1}{k-1}^{-1}.
\]
It remains to bound the sum of these terms over $k\in [cn^{1/2},k_2]$, and select a proper value 
of $c=c(n)$. Observe that for $x\in (k,n]$
\[
\binom{x-1}{k}\binom{n-1}{k}^{-1}\binom{x-1}{k-1}^{-1}\binom{n-1}{k-1}=\frac{x-k}{n-k}\le 1.
\]
So, for 
$k> k(n):= \lceil c n^{1/2}\rceil$, whence $\bar x_k> k(n)$, we have
\begin{multline*}
\binom{\bar x_k-1}{k-1}\binom{n-1}{k-1}^{-1}\le \binom{\bar x_k-1}{k(n)}\binom{n-1}{k(n)}^{-1}=\frac{(\bar x_k-1)_{k(n)}}{(n-1)_{k(n)}}\le \left(\frac{\bar x_k}{n}\right)^{k(n)}.\\
\end{multline*}
Therefore $\sum_{k\in [k(n), k_2]}\Bbb P(A_k)$ is at most of order 
\begin{multline*}
\frac{1-p}{p}\sum_{k\in [k(n), k_2]} \left(\frac{\bar x_k}{n}\right)^{k(n)}
\le \frac{1-p}{p}\sum_{k=1}^n \left(\frac{k}{n}\right)^{k(n)}\\
\le \frac{(1-p)n}{p}\int_0^{1+1/n}\zeta^{k(n)}\,d\zeta\\
=\frac{(1-p)n}{p}\cdot \frac{(1+1/n)^{k(n)+1}}{k(n)+1}
=O\left( \frac{(1-p)n}{pcn^{1/2}}\right),
\end{multline*}
if $c=O(n^{1/2})$. Besides, the expected number of unmarked records located somewhere in $[k(n)]$ is at most $(1-p)k(n)$. Thus, combining the last two bounds and \eqref{0.1}, we conclude that $\Bbb E[\la_{u}(X)]$ 
is at most of order 
\[
(1-p)\sqrt{\frac{n}{p}}+c(1-p)n^{1/2}+\frac{n^{1/2}(1-p)}{pc}=O\left(\!(1-p)\sqrt{\frac{n}{p}}\right),
\] 
if we choose $c=p^{-1/2}$, which we can do since $p\ge n^{-1}$.
\end{proof}

\begin{theorem}\label{thm2} For every $p\in [0,1]$,
\begin{equation*}
\max_{\bar X}\Bbb E[\la_{\text{m}}(X)]\le \sum_{j\in [n]} \frac{1-(1-p)^j}{j},
\end{equation*}
which is the equality if and only if $\bar X=(1,2,\dots,n)$.
\end{theorem}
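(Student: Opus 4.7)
The plan is to condition on the random set $T=\{t_1<\cdots<t_m\}$ of marked positions (so $|T|\sim\operatorname{Bin}(n,p)$) and let $v_1<v_2<\cdots<v_m$ be the sorted marked values $\{\bar x_i:i\in T\}$. After the uniform shuffle, $X_{t_i}=v_{\sigma(i)}$ for a uniformly random permutation $\sigma$ of $[m]$, independent of $T$. The key structural observation is that $t_i$ is a marked left-to-right maximum in $X$ iff two conditions hold simultaneously: (i) $\sigma(i)>\sigma(l)$ for every $l<i$, which guarantees $X_{t_i}$ beats every earlier marked value $X_{t_l}=v_{\sigma(l)}$; and (ii) $v_{\sigma(i)}>\bar x_j$ for every unmarked $j<t_i$. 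Event (i) is the left-to-right maximum of $\sigma$ at position $i$, with probability $1/i$ independent of $\bar X$ and $T$.

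Dropping condition (ii) already yields the $\bar X$-free bound $\Bbb P(t_i\text{ is a marked record}\mid T)\le 1/i$, hence $\Bbb E[\la_{\text{m}}\mid T]\le H_{|T|}$ and $\Bbb E[\la_{\text{m}}]\le\Bbb E[H_{|T|}]$. To identify this with the stated sum I would compute $\Bbb E[\la_{\text{m}}]$ directly for $\bar X=(1,\dots,n)$: here $v_i=t_i$, so whenever (i) holds one has $v_{\sigma(i)}\ge t_i>\bar x_j=j$ for every unmarked $j<t_i$, rendering (ii) automatic. Summing instead by positions $j$, and using that $|T\cap[j]|$ conditional on $j\in T$ is distributed as $1+\operatorname{Bin}(j-1,p)$, one obtains
\[
\Bbb E[\la_{\text{m}}]=\sum_{j=1}^n p\cdot\Bbb E\Bigl[\tfrac{1}{1+\operatorname{Bin}(j-1,p)}\Bigr]=\sum_{j=1}^n\frac{1-(1-p)^j}{j},
\]
where the last equality is the elementary identity $\Bbb E[1/(1+\operatorname{Bin}(m,p))]=(1-(1-p)^{m+1})/((m+1)p)$. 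Thus the identity permutation attains the bound, and in particular $\Bbb E[H_{|T|}]$ equals the stated sum.

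For the \emph{only if} direction assume $\bar X\neq(1,\dots,n)$ and pick $a<b$ with $\bar x_a>\bar x_b$. On the event $T=\{b\}$, of positive probability when $p\in(0,1)$, one has $t_1=b$ and $v_1=\bar x_b$, and condition (ii) at $i=1$ would require $\bar x_b>\bar x_a$, which fails. Hence $\Bbb P(t_1\text{ marked record}\mid T=\{b\})=0<1=H_1$, and this strict local deficit survives after averaging over $T$, giving the strict inequality for every non-identity $\bar X$.

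The combinatorial heart of the argument is the clean splitting of a marked record into the $\bar X$-free event (i) and the dominance condition (ii), the latter being automatic precisely for the identity; the only calculation of substance is the identity $\Bbb E[1/(1+\operatorname{Bin}(m,p))]=(1-(1-p)^{m+1})/((m+1)p)$, which follows at once from $\binom{m}{k}/(k+1)=\binom{m+1}{k+1}/(m+1)$ by re-indexing.
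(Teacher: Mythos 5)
Your proof is correct, and it takes a genuinely different and more elementary route than the paper's. The paper fixes a value $\ell$ and a slot $k$, counts admissible configurations of the marked sets $M_{i,j}$ explicitly, and converts the resulting sum via the beta integral \eqref{0.9new} into the per-value bound $\Bbb P(B_\ell)\le\tfrac{1-(1-p)^{n-\ell+1}}{n-\ell+1}$, which it then sums over $\ell$; equality is traced back to the condition $n_{1,2}(\ell,k)=0$ for all $\ell,k$. You instead condition on the set $T$ of marked positions and split the event that $t_i$ carries a marked record into the $\bar X$-independent event that the shuffling permutation $\sigma$ has a record at index $i$ (probability $1/i$) and the dominance condition over unmarked predecessors; discarding the latter gives $\Bbb E[\la_{\text{m}}\mid T]\le H_{|T|}$, and the identification $\Bbb E[H_{|T|}]=\sum_{j}\tfrac{1-(1-p)^j}{j}$ comes from observing that for the identity permutation the dominance condition is automatic and then computing its expectation position by position via $\Bbb E[1/(1+\operatorname{Bin}(j-1,p))]=(1-(1-p)^j)/(jp)$. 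Your decomposition buys considerable simplicity --- no generating-function or integral manipulations --- at the cost of losing the paper's sharper per-value estimate, which has independent interest. Your witness $T=\{b\}$ for an inversion $(a,b)$ settles the ``only if'' direction cleanly, and you are right to restrict it to $p\in(0,1)$: as literally stated, the equality characterization fails at $p\in\{0,1\}$, where every $\bar X$ attains the bound --- a boundary case the paper's own proof glosses over as well.
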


\begin{proof} Let $B_{\ell}$ be the event that a generic element $\ell\in [n]$ is a marked left-to-right maximum of
$X$. Then $B_{\ell}=\cup_{k\le\ell}B_{\ell,k}$; here $B_{\ell,k}$ is the event ``$\ell =x_k$ and $x_k$ is a marked left-to-right maximum of $X$''. Let us bound $\Bbb P(B_{\ell,k})$.

Introduce the sets $N_{i,j}$:
\begin{equation*}
\begin{aligned}
&N_{1,1}=\{\bar x_{\a}:\a<k,\,  \bar x_{\a}<\ell \}, \,\,\, N_{1,2}=\{\bar x_{\a}:  \a<k,\, \bar x_{\a}>\ell\},\\
&N_{2,1}=\{\bar x_{\a}:\a>k,\,  \bar x_{\a}<\ell \}, \,\,\, N_{2,2}=\{\bar x_{\a}:  \a>k,\, \bar x_{\a}>\ell\},\\
\end{aligned}
\end{equation*}
and $n_{i,j}=|N_{i,j}|$. Needless to say, $N_{i,j}$, $n_{i,j}$ depend on $\ell$ and $k$, and $\sum_{i,j}n_{i,j}=n-1$ on $B_{\ell,k}$.  We will use
\begin{equation}\label{use}
n_{\circ,1}:=n_{1,1}+n_{2,1}=\ell-1-\Bbb I(\bar x_k<\ell)\le \ell-1.
\end{equation}
Let
$M_{i,j}$ stand for the set of marked elements of $N_{i,j}$, and $m_{i,j}:=|M_{i,j}|$.  On $B_{\ell,k}$, once the marked elements different from $\ell$ are permuted, the marked element $\ell$ takes the position immediately following $(k-1)$-th element of that $(n-1)$-long sub-permutation. It means that necessarily $m_{1,2}=n_{1,2}$: otherwise an unmarked element $\bar x_{\a}\in N_{1,2}\setminus M_{1,2}$ (with $\a<k$) precedes $\ell$, preventing $\ell$ from being a left-to-right maximum in $X$.

Let $k_1$ and $k_2$ denote generic numbers of elements from $M_{1,1}$ and from $M_{2,1}$ that will stay among the first $(k-1)$ elements (among the last $n-k$ elements) of a resulting admissible permutation of $\bar X\setminus\{\ell\}$. For the event in question to hold, $k_1+k_2=k-1$. The total number of such permutations of $\bar X\setminus \{\ell\}$ with given $\{m_{i,j}\}$ is
\begin{multline*}
(k-1)!\bigl(m-(k-1)\bigr)!\prod_{(i,j)\neq (1,2)}\binom{n_{i,j}}{m_{i,j}}\cdot \sum_{k_1+k_2=k-1}\binom{m_{1,1}}{k_1}\binom{m_{2,1}}{k_2}\\
=(k-1)!\bigl(m-(k-1)\bigr)!\prod_{(i,j)\neq (1,2)}\binom{n_{i,j}}{m_{i,j}}\cdot\binom{m_{1,1}+m_{2,1}}{k-1};
\end{multline*}
$(1,2)$ is excluded from the double product since $m_{1,2}=n_{1,2}$. For every such permutation we get an admissible permutation of $\bar X$ by inserting element $\ell$ right after $(k-1)$-th element of that permutation. 
For this to happen, if $\bar x_k\neq \ell$, $\bar x_k$ needs to be marked and moved out of the slot $k$. Neglecting
this restraint, we bound the contribution of all such permutations with a given $m$ to $\Bbb P(B_{\ell,k})$,
by multiplying the above product by 
$\tfrac{p^{m+1}(1-p)^{n-m-1}}{(m+1)!}$ ($\ell$ must be marked too!) and evaluating the sum of all these fractions over $\{m_{i,j}\}$ subject to
constraints $\sum_{i,j}m_{i,j}=m$, $m_{1,2}=n_{1,2}$. So, within the factor $\tfrac{p^{m+1}(1-p)^{n-m-1}}{(m+1)!}\cdot (k-1)!(m-(k-1))!$, 
the contribution in question is at most 
\begin{equation*}
\sum_{m_{i,j}\le n_{i,j}\atop \sum_{(i,j)\neq (1,2)}m_{i,j}=m-n_{1,2} }
\!\!\!\prod_{(i,j)\neq (1,2)}\binom{n_{i,j}}{m_{i,j}}\,\,\times  \binom{m_{\circ,1}}{k-1},\\
\end{equation*}
where $m_{\circ,1}:=m_{1,1}+m_{2,1}$. Given $m$ and $m_{\circ,1}\in [k-1,m-n_{1,2}]$, we have $m_{2,2}=m-n_{1,2}-m_{\circ, 1}$, and the sum over $m_{1,1}$ and $m_{2,1}$ equals
\begin{multline*}
\binom{m_{\circ,1}}{k-1}\sum_{m_{1,1}+m_{2,1}=m_{\circ,1},\atop m_{2,2}=m-n_{1,2}-m_{\circ,1}}
\binom{n_{1,1}}{m_{1,1}}\binom{n_{2,1}}{m_{2,1}}\binom{n_{2,2}}{m_{2,2}}\\
=\binom{m_{\circ,1}}{k-1}\binom{n_{\circ,1}}{m_{\circ,1}}\binom{n_{2,2}}{m-n_{1,2}-m_{\circ,1}}\\
=\binom{n_{\circ,1}}{k-1}\binom{n_{\circ,1}-(k-1)}{m_{\circ,1}-(k-1)}\binom{n_{2,2}}{m-n_{1,2}-m_{\circ,1}},
\end{multline*}
where, by \eqref{use},  $n_{\circ,1}:=n_{1,1}+n_{2,1}\le\ell-1$. So, given $m$, the sum over all $m_{\circ,1}\in [k-1,m-n_{1,2}]$ is bounded by 
\[
\binom{\ell-1}{k-1}\binom{n-1-n_{1,2}-(k-1)}{m-n_{1,2}-(k-1)},
\]
which is zero, unless $\ell\ge k$. So, the counterpart of the formula \eqref{new1} is 
\begin{multline}\label{new2}
\Bbb P(B_{\ell,k})=\sum_{m\ge n_{1,2}+k-1} \Bigl[(m+1)\binom{m}{k-1}\Bigr]^{-1}\\
\times \binom{\ell-1}{k-1}\binom{n-1-n_{1,2}-(k-1)}{m-n_{1,2}-(k-1)}\cdot p^{m+1}(1-p)^{n-1-m}.
\end{multline}
Using \eqref{classic},  we transform \eqref{new2} into 
\begin{multline*}
\Bbb P(B_{\ell,k})
=(1-p)^{n-1}p\,\int_0^1\binom{\ell-1}{k-1}\left(\frac{px}{1-p}\right)^{k-1}
\left(\frac{p(1-x)}{1-p}\right)^{n_{1,2}}\\
\times\sum_{m\ge k-1+n_{1,2}}\binom{n-k-n_{1,2}}{m-k-n_{1,2}+1}\left(\frac{p(1-x)}{1-p}\right)^{m-k-n_{1,2}+1}\,dx\\
=(1-p)^{n-1}p\int_0^1\left(\frac{p(1-x)}{1-px}\right)^{n_{1,2}}\binom{\ell-1}{k-1}\left(\frac{px}{1-p}\right)^{k-1}\left(\frac{1-px}{1-p}\right)^{n-k}\,dx\\
\le(1-p)^{n-1}p\int_0^1
\binom{\ell-1}{k-1}\left(\frac{px}{1-p}\right)^{k-1}\left(\frac{1-px}{1-p}\right)^{n-k}\,dx,
\end{multline*}
since $\tfrac{p(1-x)}{1-px}\le 1$. Crucially, the bottom RHS does not depend on $n_{1.2}$. And 
\begin{multline*}
\sum_{k\in [\ell]} \binom{\ell-1}{k-1}\left(\frac{px}{1-p}\right)^{k-1}\left(\frac{1-px}{1-p}\right)^{n-k}\\
=\left(\frac{1-px}{1-p}\right)^{n-\ell}\left(\frac{px}{1-p}+\frac{1-px}{1-p}\right)^{\ell-1}=\frac{(1-px)^{n-\ell}}{(1-p)^{n-1}}.\\
\end{multline*}
So, summing $\Bbb P(B_{\ell,k})$ over $k\in [\ell]$, we conclude that 
\begin{equation*}
\Bbb P(B_{\ell})\le p\int_0^1(1-px)^{n-\ell}\,dx=\tfrac{1-(1-p)^{n-\ell+1}}{n-\ell+1}.
\end{equation*}
Therefore $\Bbb E[\la_{\text{m}}]\le \sum_{j\in [n]}\tfrac{1-(1-p)^j}{j}$, and this upper bound is attained if and only if, for each $\ell$ and $k$, we have $n_{1,2}=n_{1,2}(\ell,k)=0$, which happens if and only if
$\bar X=(1,2,\dots,n)$.
\end{proof}
\begin{corollary}\label{cor1} For $p\gg n^{-1}, 1-p\ge \text{const }n^{-1/2}\log n$,  $\max_{\bar X}\Bbb E[\la(X)]=O\Bigl((1-p)\sqrt{\tfrac{n}{p}}\Bigr)$.
\end{corollary}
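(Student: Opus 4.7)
The plan is to combine Theorems \ref{thm1} and \ref{thm2} directly, since the decomposition $\la(X)=\la_{\text{u}}(X)+\la_{\text{m}}(X)$ is already set up, and $\mathbb{E}[\la_{\text{u}}(X)]$ is handled by Theorem \ref{thm1} (whose hypothesis $p\gg n^{-1}$ is included in the corollary's hypotheses). So the entire task reduces to showing that the upper bound $\sum_{j\in[n]}\frac{1-(1-p)^j}{j}$ on $\mathbb{E}[\la_{\text{m}}(X)]$ from Theorem \ref{thm2} is itself $O\bigl((1-p)\sqrt{n/p}\bigr)$ under the extra hypothesis $1-p\ge \text{const}\cdot n^{-1/2}\log n$.

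Bounding the sum should be routine: using $1-(1-p)^j\le 1$ immediately gives $\sum_{j\in[n]}\frac{1-(1-p)^j}{j}\le H_n=O(\log n)$. (A slightly sharper but unnecessary estimate, splitting at $j\asymp 1/p$ and using $1-(1-p)^j\le jp$ for $j\le 1/p$, yields $O(1+\log(np))$, but the cruder $O(\log n)$ is already enough.) Thus $\mathbb{E}[\la_{\text{m}}(X)]=O(\log n)$ with no further conditions on $p$.

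The point of the extra hypothesis is exactly to force $\log n$ to be absorbed into $(1-p)\sqrt{n/p}$. Indeed, from $1-p\ge c\,n^{-1/2}\log n$ we get
\[
(1-p)\sqrt{\tfrac{n}{p}}\ge c\,\tfrac{\log n}{\sqrt{p}}\ge c\log n,
\]
since $p\le 1$. Combined with $\mathbb{E}[\la_{\text{m}}(X)]=O(\log n)$, this gives $\mathbb{E}[\la_{\text{m}}(X)]=O\bigl((1-p)\sqrt{n/p}\bigr)$, and adding the bound from Theorem \ref{thm1} yields the corollary.

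There is essentially no obstacle here; the corollary is a one-line consequence of the two theorems once one checks the elementary inequality $\log n = O\bigl((1-p)\sqrt{n/p}\bigr)$ under the stated lower bound on $1-p$. The only thing to double check is that the two thresholds on $p$ in the hypothesis serve distinct purposes: $p\gg n^{-1}$ is required by Theorem \ref{thm1} to control $\mathbb{E}[\la_{\text{u}}(X)]$, while $1-p\ge c\,n^{-1/2}\log n$ is the sole assumption needed to make the $\mathbb{E}[\la_{\text{m}}(X)]$ bound match the target rate.
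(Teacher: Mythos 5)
Your proposal is correct and follows essentially the same route as the paper: combine Theorems \ref{thm1} and \ref{thm2}, bound the sum $\sum_{j\in[n]}\tfrac{1-(1-p)^j}{j}$ by a logarithm, and absorb it into $(1-p)\sqrt{n/p}$ via the hypothesis $1-p\ge \text{const}\,n^{-1/2}\log n$. The only (immaterial) difference is that the paper refines the sum to $\log(npe^2)$ by subtracting $-\log p=\sum_{j\ge1}(1-p)^j/j$, whereas you use the cruder $H_n=O(\log n)$; both reduce to the same final inequality $\log n=O\bigl((1-p)\sqrt{n/p}\bigr)$.
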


\begin{proof} In view of Theorems $1,2$, we need to prove that for $p$ in question
\[
\sum_{j\in [n]}\frac{1-(1-p)^j}{j}=O\left(\!(1-p)\sqrt{\frac{n}{p}}\right).
\]
Denote $H_n=\sum_{j\in [n]}\tfrac{1}{j}$. Since $1/j$ decreases with $j$, we have $H_n\le 1+\int_1^n\tfrac{dx}{x}=\log(ne)$. So,
\begin{multline*}
\sum_{j\in [n]}\frac{1-(1-p)^j}{j}=H_n-\sum_{j\ge 1}\frac{(1-p)^j}{j}+\sum_{j>n}\frac{(1-p)^j}{j}\\
\le \log(ne)+\log p+\frac{1}{(n+1)p}
\le\log(npe^2)=O\left(\!(1-p)\sqrt{\frac{n}{p}}\right),
\end{multline*}
which is true since $1-p\ge \text{const }n^{-1/2}\log n$.
\end{proof}

Banderier, Beier, and Mehlhorn \cite{BMB} discovered that for a permutation
\[
\bar X=(n-k,n-k+1,\dots,n, 1,2,\dots,n-k-1),\quad k:=\big\lfloor\sqrt{n/p}\big\rfloor,
\]
$\Bbb E[\la(X)]$ is {\it at least\/} of order $\sqrt{n/p}$, if $p<1/2$, and fixed. And then they obtained an almost matching upper bound $O(\sqrt{(n/p)\log n})$ that holds uniformly for every $\bar X$.

Slightly modified, the ingenious argument in \cite{BMB} has enabled us to prove that, for the same $\bar X$,  $\Bbb E[\la(X)]$ is {\it at least\/} of order $(1-p)\sqrt{n/p}$, if $p\gg n^{-1}$ and $1-p\gg n^{-1/2}$.
Combining this stronger result and the corollary we obtain that $\max_{\bar X}\Bbb E[\la(X)]$ is of order 
$(1-p)\sqrt{n/p}$ exactly if $p\gg n^{-1}$ and $1-p\ge \text{const }n^{-1/2}\log n$. 
\bigskip

{\bf Acknowledgment.\/} Years ago, my brother-in-law Roma Goldenberg mailed me from Russia three first volumes of Don Knuth's ``The Art of Computer Science'', just translated into Russian. Reading these books turned out to be a life changing experience for me. Thanks to Don, I learned about the work of Cyril Banderier, Rene Beier, and Kurt Mehlhorn during the editing phase for the fourth volume in that series.

\end{document}